\title{The irreducible representations of 3-dimensional Sklyanin algebras}
\author{Kevin De Laet}
\address{Department of Mathematics, University of Antwerp \\ 
 Middelheimlaan 1, B-2020 Antwerp (Belgium) \\ {\tt kevin.delaet2@uantwerpen.be}}
\date{}
\tikzset{
  vertice/.style={circle,draw=black},
  decoration={markings,mark=at position 0.5 with {\arrow{>}}}
}
\newcommand{\wis}[1]{{\text{\em \usefont{OT1}{cmtt}{m}{n} #1}}}
\newcommand{\C}{\mathbb{C}}
\newcommand{\N}{\mathbb{N}}
\newcommand{\Z}{\mathbb{Z}}
\newcommand{\A}{\mathbb{A}}
\newcommand{\PP}{\mathbb{P}}
\newcommand{\V}{\mathbf{V}}
\theoremstyle{plain}
\newtheorem{theorem}{Theorem}
\newtheorem{proposition}{Proposition}
\newtheorem{conjecture}[theorem]{Conjecture}
\newtheorem{example}{Example}
\DeclareMathOperator{\diag}{diag}
\DeclareMathOperator{\tr}{tr}
\DeclareMathOperator{\Ann}{Ann}
\DeclareMathOperator{\END}{END}
\DeclareMathOperator{\qgr}{qgr}
\DeclareMathOperator{\coh}{coh}
\numberwithin{equation}{section}
\begin{document}
\begin{abstract}
In this article a complete description is given of the simple representations of a $3$-dimensional Sklyanin algebra associated to a torsion point. In order to determine these irreducible representations, a review is given of classical results regarding representation theory of graded rings with excellent homological and algebraic properties.
\end{abstract}
\maketitle

\section{Introduction}
The 3-dimensional Sklyanin algebras form the most important class of Artin-Schelter regular algebras of global dimension 3. These algebras are parametrized by the following geometric data: an elliptic curve $E$ and a point $\tau \in E$. In addition, they form a class of graded algebras with excellent ringtheoretical and homological properties, which was shown in the original papers by Artin, Tate and Van den Bergh, \citep{artin2007some} and \citep{artin1987graded}.
\par Recently, the representation theory of these algebras have gained attention, see for example \citep{bruyn2017superpotential}, \citep{de2015geometry}, \citep{walton2017poisson} and \citep{walton2012representation}.  In \citep{smith1994center} it was shown that a Sklyanin algebra is a finite module over its center if and only if $\tau$ is a torsion point of $E$, in which case the Sklyanin algebra is a maximal order over its center. Consequently, in the case of a torsion point, it is useful to describe the simple representations.
\par The point of this article is to explain the connection between the study of fat point modules, $\wis{PGL}_n \times \C^*$-stabilizers of simple representations and $\C^*$-families of (non-trivial) simple representations of a graded algebra $A$ with good properties. This connection is known by the experts, but it is not well-known by the generic mathematician working in noncommutative algebraic geometry.
As a consequence, we verify \citep[Conjecture 1.5]{walton2017poisson} using these methods.
\begin{conjecture}\citep[Conjecture 1.5]{walton2017poisson}
The simple representations of a 3-dimensional Sklyanin algebra with $\tau$ a torsion point of order $n$ with $(n,3)=3$ are of dimension $n$, $\frac{n}{3}$ or $1$. The simple representations of dimension $\frac{n}{3}$ form a $3:1$-cover over three lines intersecting in a unique point.
\end{conjecture}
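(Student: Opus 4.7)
The plan is to apply the framework set up earlier in this paper --- the correspondence between fat point modules of multiplicity $e$, $\wis{PGL}_e\times\C^*$-stabilizers of $e$-dimensional simple representations, and $\C^*$-families of such --- to the $3$-dimensional Sklyanin algebra $A=A(E,\tau)$ with $\tau$ of order $n$ and $3\mid n$. The correspondence reduces the classification of simple representations to a classification of fat point modules of the various possible multiplicities, and the only multiplicities appearing must divide the PI-degree.

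\textbf{Dimensions $n$ and $1$.} Since $A$ is a maximal order over its centre when $\tau$ has finite order \citep{smith1994center}, its PI-degree equals the generic multiplicity of a fat point module. Generic $\tau$-orbits on the point scheme $E\subset\PP^2$ have length $n$, so the PI-degree is $n$ and these orbits yield a family of $n$-dimensional simple representations parametrised by (a birational model of) $E/\langle\tau\rangle$. The three $1$-dimensional simples are obtained from the three triangle points in the degenerate part of the point / line scheme, i.e.\ from the central characters that factor through the augmentation-like quotient; every one-dimensional simple of a Sklyanin algebra is known to be of this form.

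\textbf{Dimension $n/3$.} The hypothesis $3\mid n$ forces the $3$-torsion subgroup $E[3]$ to interact non-trivially with the subgroup $\langle\tau\rangle\subset E$: on three specific $E[3]$-cosets the orbit of translation by $\tau$, enlarged by the symmetry exchanging the three triangle points, has length only $n/3$ rather than $n$. These degenerate orbits produce fat point modules of multiplicity $n/3$. By the framework, each such fat point yields a $\C^*$-family of simple $(n/3)$-dimensional representations, and the residual $\mu_3$-stabiliser (coming from $E[3]$ acting on $A$ by graded automorphisms cycling the triangle) realises the family as a $3:1$ cover of its underlying rational moduli curve. Three such families arise, one per $E[3]$-coset, and they meet at the single point corresponding to the most symmetric degeneration --- the orbit of the identity of $E$ --- producing the union of three lines intersecting in a unique point.

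\textbf{Main obstacle.} The hard part is step 2: producing the fat point modules of multiplicity $n/3$ and ruling out intermediate multiplicities. This cannot be read off from $E$ alone and requires either exhibiting central elements of $A$ of degree $n/3$ whose existence is forced by the $E[3]$-symmetry (and then identifying the zero locus of the relevant reduced trace), or a direct analysis of the ramification of the map $\operatorname{Proj}(A)\to\operatorname{Proj}(Z(A))$ along the non-Azumaya locus. Once the list of possible multiplicities is shown to be exactly $\{1,n/3,n\}$, the remaining geometric statements (the $3:1$ cover, the three concurrent lines) follow formally from the general stabilizer machinery recalled in the earlier sections.
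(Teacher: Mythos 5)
Your framework is the right one, and you correctly flag that the crux is producing the multiplicity-$\frac{n}{3}$ fat points and excluding intermediate multiplicities; but that step is left open, and the mechanism you sketch for it would not work. You locate the $\frac{n}{3}$-dimensional simples over degenerate $\tau$-orbits on the point scheme $E$ itself (``three specific $E[3]$-cosets''). This cannot happen: every fat point annihilated by the degree-$3$ central element $g$ is a point module parametrized by $E$, and since the shift functor on $E\subset\wis{Proj}(\mathcal{O}_\tau(E))$ is translation by $\tau$, every point module is $n$-periodic and assembles into an $n$-dimensional simple with $\Z_n$-stabilizer. The $\frac{n}{3}$-dimensional simples live entirely \emph{off} $E$. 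The paper gets them as follows: by \citep{smith1994center} the center is generated in degrees $n,n,n$ and $3$, so by Proposition~\ref{prop:e} the map $\mathcal{Z}\rightarrow\mathcal{R}$ is the quotient by an automorphism $\alpha$ of order $3$; by \citep{artin1992geometry} every fat point not killed by $g$ has multiplicity $s=\frac{n}{3}$ (this is what rules out intermediate multiplicities); and since $\alpha$ must act freely on the ramification curve $E'\subset\mathcal{Z}\cong\PP^2$, it is conjugate to $\diag(1,\omega,\omega^2)$ and so has exactly three fixed points, all in $\PP^2\setminus E'$. Those three fixed points are the $1$-periodic multiplicity-$s$ fat points giving the three families of $s$-dimensional simples; the non-fixed points are $3$-periodic multiplicity-$s$ fat points that reassemble into $n$-dimensional simples with $\Z_3$-stabilizer. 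This last class is the generic one and is missing from your dimension count (note also that this shows the generic fat point multiplicity is $\frac{n}{3}$, not the PI-degree $n$ as you assert; the PI-degree is period times multiplicity).

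Two further misstatements. First, the $3{:}1$ is not a stabilizer phenomenon: the $s$-dimensional simples have \emph{trivial} $\wis{PGL}_s(\C)\times\C^*$-stabilizer. The cover is $3{:}1$ over the central variety because the degree-$s$ elements $u_0,v_0,w_0$ are normal but not central and act by scalars on these simples, while only their cubes $u,v,w$ lie in $R$; hence three inequivalent simples share each central character on $\V(uv,vw,wu)\setminus\V(u,v,w,g)\subset\wis{Spec}(R)$, and these three lines meet at the origin of $\wis{Spec}(R)$ (the trivial representation), not at anything attached to the identity of $E$. Second, there are not three $1$-dimensional simples: the only one is the trivial representation $A/A^+$, supported at $\V(u,v,w,g)$.
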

\subsection{Statement}
The author hereby acknowledges that his own contribution is mainly combining previous results by Artin, Tate, Van den Bergh, Le Bruyn, Smith and others.
\subsection{Conventions}
In this article, the following notations and conventions are used.
\begin{itemize}
\item We will work over $\C$.
\item The element $\omega \in \C^*$ will be a primitive third root of unity.
\item The element $\rho \in \C^*$ will be a primitive $n$th root of unity for a specified $n \in \N$.
\item For a graded algebra $A$, $\wis{Proj}(A) = \qgr(A)$ is the category of finitely generated, graded $A$-modules, modulo torsion modules. Recall that there is an automorphism on $\wis{Proj}(A)$ by the shift functor: if $M = \oplus_{i \in \N} M_i \in \wis{Proj}(A)$, then $M[1]$ is the graded module with degree $k$-part isomorphic to $M_{k+1}$.
\item For $A$ any algebra, the set $\wis{irrep}A$ is the set of simple representations of $A$ up to equivalence. With $\wis{irrep}_k A$ for $k \in \N$ the subset of $k$-dimensional simple representations up to equivalence are denoted.
\item If $\mathcal{A}$ is a sheaf of coherent algebras over $X$ with $X$ a projective scheme, then $\wis{irrep}\mathcal{A}$ denotes the sheaf of sets of simple representations up to equivalence over $X$, that is, 
$$
(\wis{irrep}\mathcal{A})(U) = \wis{irrep} \mathcal{A}|_U.
$$
With $\wis{irrep}_k \mathcal{A}$ we denote the subsheaf of sets of simple $k$-dimensional representations up to equivalence over $X$.
\end{itemize}
\section{Noncommutative projective geometry}
\subsection{Cayley-Hamilton algebras}
In this section a review of the results of \citep{le1998generating}, \citep{lebruyn1995central} and \citep{le2007noncommutative} is discussed. We will assume that
\begin{itemize}
\item $$
A = \C \oplus A_1 \oplus A_2 \oplus \ldots
$$ is a graded algebra, finitely generated in degree one with quadratic relations,
\item $A$ is a domain,
\item $A$ has global dimension $d$ and $H_A(t) = (1-t)^{-d}$ for some $d \in \N$, and
\item $A$ is a finite module over its center $R = Z(A)$, which is a normal domain.
\end{itemize} 
The algebras of current interest, the Sklyanin algebras at torsion points, have all these properties by \citep[Corollary 1.3]{tate1996homological}.
\par Let $n\in \N$ be equal to $$n=\max \{m :\exists \phi:\xymatrix{A\ar@{->>}[r]
& \wis{M}_m(\C)} \text{ algebra morphism}\}.$$ Under these conditions, $A$ is a Cayley-Hamilton of degree $n$, that is, there exists an $R$-linear map $\tr: \xymatrix{A \ar[r]& R}$ such that
\begin{itemize}
\item $\tr(1)=n$,
\item $\tr(ab)=\tr(ba)$ for each $a,b, \in A$, and
\item $\chi_{n,a}(a)=0$ for each $a \in A$.
\end{itemize}
The element $\chi_{n,a}(a)$ is the $n$th Cayley-Hamilton polynomial of degree $n$, expressed in the sums of powers of $a$, as explained in \citep[Section 2.3]{procesi1987formal}.
\par A grading on an algebra $A$ is equivalent to an action of the torus group $\C^*$, by the rule
$$
a \in A_n \Leftrightarrow t \cdot a = t^k a \text{ for each } k \in \N, t \in \C^*, a \in A.
$$
As such, $\tr$ is also a degree-preserving map, that is, $\tr(A_n) \subset A_n$.
\begin{example}
Let $n \in \N$ be an integer strictly larger than $1$ and take the algebra
$$
A_\rho = \C_\rho[x,y] = \C\langle x,y \rangle/(xy-\rho yx),
$$
classically graded by $\deg(x)=\deg(y)=1$. Then $Z(A_\rho) = \C[x^n,y^n]$. The natural trace map on $A_\rho$ is the $\C$-linear extension of:
$$
\tr(x^ky^l) = \begin{cases}
n x^ky^l,\quad \text{if }(k,l) \in (n\N)^2,\\
0,\quad \text{otherwise}
\end{cases}
$$
The couple $(A_\rho,\tr)$ determines a Cayley-Hamilton algebra of degree $n$.
\label{ex:quantumunity}
\end{example}
To $A$ is associated the affine variety of trace-preserving $n$-dimensional representations, that is,
$$
X_A=\wis{trep}_n A := \{\phi:\xymatrix{A \ar[r]& \wis{M}_n(\C)} \text{ algebra morphism} :\tr\circ \phi = \phi \circ \tr\},
$$
with the trace on $\wis{M}_n(\C)$ being the usual trace map. The projective special linear group of degree $n$, $\wis{PGL}_n(\C)$ acts on this variety by conjugating matrices, such that $\wis{PGL}_n(\C)$-orbits correspond to isomorphism classes of $n$-dimensional representations. By a result of Artin \citep[Section 12.6]{artin1969azumaya}, one has
$$
X_A /\wis{PGL}_n(\C) \cong \wis{Spec}(R).
$$
In addition, as $A$ is graded, $\C^*$-acts on $X_A$. The only closed orbit, as in the commutative case, is the trivial representation $(A/A^+)^{\oplus n}$, which is also the only orbit with a non-trivial $\C^*$-stabilizer. From the fact that the actions of $\C^*$ and $\wis{PGL}_n(\C)$ on $X_A$ commute, it follows that $X_A$ is a $\wis{PGL}_n(\C) \times \C^*$-variety.
\par Let $M$ be a simple $n$-dimensional $A$-representation, then its $\wis{PGL}_n(\C)$-stabilizer is trivial by Schur's lemma. As mentioned before, the $\C^*$-stabilizer of $M$ is also trivial, but the $\wis{PGL}_n(\C) \times \C^*$-stabilizer doesn't have to be trivial. From \citep[lemma 4, theorem 2]{bocklandt2006local}, it follows that this stabilizer is always finite and is conjugated to the group generated by $(g_\zeta,\zeta) \in \wis{PGL}_n(\C)\times \C^*$, with $\zeta = \rho^{\frac{n}{e}}$ for some $e \in \N$ and
$$
g_\zeta = \diag(\underbrace{1,\ldots,1}_{m_0},\underbrace{\zeta,\ldots,\zeta}_{m_1},\ldots,\underbrace{\zeta^{e-1},\ldots,\zeta^{e-1}}_{m_{e-1}})
$$
\par If $M$ is a $k$-dimensional simple $A$-representation with $k < n$, one can look at $\wis{rep}_k A$ and calculate the $\wis{PGL}_k(\C) \times \C^*$-stabilizer.
\subsection{Graded matrix rings}
Let $M$ be a simple $A$-module of dimension $k$ with $k\leq n$ and let $\mathfrak{m} = \Ann_A(M)$.
Then we have a map
$$
\xymatrix{A_{\mathfrak{m}} \ar[r]^-\phi& \wis{M}_k(\C)}
$$
Let $$\mathfrak{m}^g = \max \{ I \subset  \mathfrak{m}: I \text{ homogeneous ideal} \}.$$
Then $\mathfrak{m}^g$ is a maximal homogeneous ideal. By a graded version of Artin-Wedderburn \citep[theorem I.5.8]{nastasescu2011graded}, there is an integer $e \in \N \setminus\{0\}$ and a $k$-tuple $(a_1,\ldots,a_k) \in \N^k$ with $0\leq a_1 \leq a_2 \leq \ldots \leq a_k < e$ such that
$$
A_{\mathfrak{m}^g}/\mathfrak{m}^g \cong \wis{M}_k(\C[t,t^{-1}])(a_1,a_2,\ldots,a_k)
$$
with $\deg(t) = e$. If $\deg(t) = 1$ then we will suppress the integers $a_1,\ldots,a_k$. This isomorphism is a graded isomorphism. Recall that for a $\Z$-graded ring $S$ and a $k$-tuple $(a_1,\ldots,a_k)\in \Z^k$ the matrix ring $\wis{M}_k(S)(a_1,a_2,\ldots,a_k)$ has degree $m$-part for $m \in \Z$
$$
\wis{M}_k(S)(a_1,a_2,\ldots,a_k)_m =
\begin{bmatrix} 
S_m & S_{m-a_1+a_2} & \hdots & S_{m-a_1+a_k} \\
S_{m-a_2+a_1} & S_m & \hdots & S_{m-a_2+a_k} \\
\vdots & \vdots & \ddots & \vdots \\
S_{m-a_k+a_1} & S_{m-a_k+a_2} & \hdots & S_m
\end{bmatrix}.	
$$
By construction, there is a commutative diagram
$$
\xymatrix{A_{\mathfrak{m}} \ar@{->>}[r]^-\phi& \wis{M}_k(\C)\\
A_{\mathfrak{m}^g} \ar@{->>}[r]^-\psi \ar[u]^-i& \wis{M}_k(\C[t,t^{-1}])(a_1,a_2,\ldots,a_k) \ar[u]_-{t \mapsto 1}}
$$
By \citep[Theorem 1]{le1998generating}, $\wis{M}_k(\C[t,t^{-1}])(a_1,a_2,\ldots,a_k)$ is generated in degree one over its center if and only if
$$
(a_1,a_2,\ldots,a_k) = (\underbrace{0,\ldots,0}_{m_0},\underbrace{1,\ldots,1}_{m_1},\ldots,\underbrace{e-1,\ldots,e-1}_{m_{e-1}})
$$
with $m_j >0$ and $\sum_{j=0}^{e-1} m_j = k$, which we will assume throughout this paper. By assumption, this means that if $x \in A_1$, then
$$
\psi(x)=\begin{bmatrix}
0& 0 &  \hdots & M(x)_0 t\\
M(x)_1 & 0 & \hdots & 0\\
\vdots & \ddots & \ddots& \vdots \\
0 &  \hdots & M(x)_{e-1} & 0
\end{bmatrix}
$$
with $M(x)_l \in \wis{M}_{m_l \times m_{l-1}}(\C)$, indices taken in $\Z_e$. But then, by construction, $\phi$ has as $\wis{PGL}_k(\C) \times \C^*$-stabilizer the cyclic group $\langle (g_\zeta,\zeta)\rangle$.
\begin{example}
Continuing example \ref{ex:quantumunity} with $\rho = -1$, then the center of $A=A_{-1}$ is $\C[x^2,y^2]$. Over $\mathfrak{m}=(x^2-1,y^2-1)$ lies a unique two-dimensional simple representation. Then $\mathfrak{m}^g = (x^2-y^2)$. As $Z(A_{\mathfrak{m}^g}) \subset \oplus_{k \in 2 \Z} (A_{\mathfrak{m}^g})_k$ and $A_{\mathfrak{m}^g}$ is generated over its center by degree one elements, it follows that
$$
A_{\mathfrak{m}^g}/\mathfrak{m}^g \cong \wis{M}_2(\C[t,t^{-1}])(0,1), \deg(t)=2.
$$
By construction, a $\C^*$-family of simple two-dimensional representations of $A$ is determined by
$$
\begin{bmatrix}
0 & t \\ 1 & 0
\end{bmatrix},
\begin{bmatrix}
0 & -t \\ 1 & 0
\end{bmatrix}\in \wis{M}_2(\C[t,t^{-1}])(0,1).
$$
For each $t \neq 0$ and associated algebra morphism $\phi_t$ obtained by specialising these two matrices, the $\wis{PGL}_2(\C)\times \C^*$-stabilizer is determined by
$$
\left\langle \begin{bmatrix} 1&0\\ 0 & -1 \end{bmatrix},-1\right\rangle.
$$
\label{example:q=-1}
\end{example}
\subsection{Periodic fat point modules}
Associated to 
$$
\xymatrix{A_{\mathfrak{m}^g}\ar@{->>}[r]^-\psi & \wis{M}_k(\C[t,t^{-1}])(\underbrace{0,\ldots,0}_{m_0},\underbrace{1,\ldots,1}_{m_1},\ldots,\underbrace{e-1,\ldots,e-1}_{m_{e-1}})}
$$
are $e$ $e$-periodic fat point modules (\citep[Section 6]{le1998generating}). Recall that a fat point module is a simple object of $\wis{Proj}(A)$. For $j \in \N$, let $V_j = \C^{m_l}$ if $j \equiv l \bmod \Z_e$. Let $M(\psi) = \oplus_{j \in \N} V_j$ and define an action of $A$ by extending
$$
x \in A_1, v \in M(\psi)_j=V_j: xv := M(x)_{j+1}v \text{ as an element of } M(\psi)_{j+1} = V_{j+1}.
$$
As a $\C[t]$-module, this is just $\oplus_{i=0}^{e-1} (\C[t])[i]^{\oplus m_i}$. Of course, this makes sense as $t$ acts faithfully on this module and so 
$$
\underline{\END}(\oplus_{i=0}^{e-1} (\C[t,t^{-1}][i])^{\oplus m_i}) = \wis{M}_k(\C[t,t^{-1}])(\underbrace{0,\ldots,0}_{m_0},\underbrace{1,\ldots,1}_{m_1},\ldots,\underbrace{e-1,\ldots,e-1}_{m_{e-1}}).
$$
By surjectivity of $\psi$, $M(\psi)$ is a fat point module of $A$. However, if $e>1$, then $M(\psi)[k] \not\cong M(\psi)$ for $1\leq k \leq e-1$, but $M(\psi)[e]\cong M(\psi)$ in $\wis{Proj}(A)$. By definition, $M(\psi)$ is an $e$-periodic fat point module of multiplicity $(m_0,m_1,\ldots,m_{e-1})$.
\par
The other way round, suppose that $M$ is an $e$-periodic fat point module of multiplicity $(m_0,m_1,\ldots,m_{e-1})$. Then to $M$ is associated a $\sum_{i=0}^{e-1} m_i=k$-dimensional representation $S$ by the following rule:
$$
x \in A_1, v \in M_j: \text{ if } xv = M(x)_{j+1}v \text{ as an element of } M_{j+1} = V_{j+1}
$$
then one associates the algebra map
$$
\psi_M(x)=\begin{bmatrix}
0& 0 &  \hdots & M(x)_0\\
M(x)_1 & 0 & \hdots & 0\\
\vdots & \ddots & \ddots& \vdots \\
0 &  \hdots & M(x)_{e-1} & 0
\end{bmatrix}.
$$ 
This algebra map corresponds to a simple representation of $A$, for if this was not simple, then this would imply that $M$ is not a fat point module.
\par In short, we have
\begin{theorem}
\citep[Proposition 3]{le1998generating}
There is a one-to-one correspondence between
\begin{itemize}
\item simple $k$-dimensional representations with $\wis{PGL}_k(\C)\times \C^*$-stabilizer generated by $(g_\zeta,\zeta)$ with $g_\zeta = \diag(\underbrace{1,\ldots,1}_{m_0},\underbrace{\zeta,\ldots,\zeta}_{m_1},\ldots,\underbrace{\zeta^{e-1},\ldots,\zeta^{e-1}}_{m_{e-1}})$ and
\item shift-equivalence classes of fat point modules of period $e$ and multiplicity
$(m_0,m_1,\ldots,m_{e-1})$.
\end{itemize}
\end{theorem}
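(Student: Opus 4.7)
The plan is to make the correspondence explicit using the two constructions displayed just above the theorem, and to verify that they descend to mutually inverse maps on equivalence classes. In one direction, take a simple $k$-dimensional representation $\phi:A\twoheadrightarrow \wis{M}_k(\C)$ with the prescribed stabilizer; set $\mathfrak{m}=\Ann_A(M)$, let $\mathfrak{m}^g$ be the maximal homogeneous ideal inside $\mathfrak{m}$, and apply the graded Artin--Wedderburn theorem to obtain $\psi:A_{\mathfrak{m}^g}\twoheadrightarrow\wis{M}_k(\C[t,t^{-1}])(a_1,\ldots,a_k)$. The hypothesis that this target be generated in degree one, combined with the stabilizer data, pins down the shift tuple as $(0^{m_0},1^{m_1},\ldots,(e-1)^{m_{e-1}})$; the graded module $M(\psi)=\oplus_{i=0}^{e-1}(\C[t,t^{-1}])[i]^{\oplus m_i}$ is then an $e$-periodic fat point module of multiplicity $(m_0,\ldots,m_{e-1})$. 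In the other direction, given such a fat point module $M$, the block structure of its degree-one action provides the cyclic matrix $\psi_M$ written above the theorem, whose specialization at $t=1$ is a simple representation of $A$ whose stabilizer is computed (by the analysis carried out in the previous subsection) to be exactly $\langle (g_\zeta,\zeta)\rangle$.

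For well-definedness on the representation side, $\mathfrak{m}$ depends only on the isomorphism class of $\phi$; consequently $A_{\mathfrak{m}^g}/\mathfrak{m}^g$ is determined up to graded isomorphism, and so is $M(\psi)$ up to shift-equivalence. For well-definedness on the fat point side, the shift $M[1]$ cyclically relabels the blocks $V_j$, so the resulting algebra map $\psi_{M[1]}$ differs from $\psi_M$ by conjugation with a block permutation matrix in $\wis{PGL}_k(\C)$ combined with the $\C^*$-action $t\mapsto\zeta t$; the specializations at $t=1$ are therefore isomorphic as simple representations, and the whole orbit $\{M,M[1],\ldots,M[e-1]\}$ yields a single isomorphism class.

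Mutual inversion is then essentially tautological from the identification
$$
\underline{\END}\bigl(\oplus_{i=0}^{e-1}(\C[t,t^{-1}])[i]^{\oplus m_i}\bigr)=\wis{M}_k(\C[t,t^{-1}])(0^{m_0},1^{m_1},\ldots,(e-1)^{m_{e-1}})
$$
already noted before the theorem: starting from $\phi$, forming $\psi$, then $M(\psi)$, and then reading off $\psi_{M(\psi)}$ recovers $\psi$, and specialization at $t=1$ recovers $\phi$; starting from $M$, the graded endomorphism ring of its underlying $\C[t,t^{-1}]$-module is the target of $\psi_M$, so $M(\psi_M)\cong M$ in $\wis{Proj}(A)$.

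The main obstacle I anticipate is the bookkeeping ensuring that the multiplicity tuple and the period are canonical invariants, rather than artifacts of a choice of presentation. This hinges on the hypothesis, invoked through \citep[Theorem 1]{le1998generating}, that the graded matrix ring be generated in degree one: this forces the shift tuple into the unique normal form $(0^{m_0},1^{m_1},\ldots,(e-1)^{m_{e-1}})$ with all $m_j>0$. Without this normalization one could inflate $e$ to a multiple and reshuffle the $m_j$, and the map from shift-equivalence classes to isomorphism classes would cease to be injective; with it, both $e$ and $(m_0,\ldots,m_{e-1})$ become invariants readable off either side of the correspondence, closing the argument.
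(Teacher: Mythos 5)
Your proposal follows the same route as the paper: the theorem is presented there as a summary (``in short, we have'') of the two explicit constructions $\psi \mapsto M(\psi)$ and $M \mapsto \psi_M$ developed in the preceding subsection, together with the observation that the normal form of the shift tuple forced by generation in degree one encodes the stabilizer. You add the well-definedness and mutual-inversion checks that the paper leaves implicit, which is a welcome elaboration rather than a departure.
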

If $m_0 = m_1 = \ldots =m_{e-1}=1$, then $M$ is called a point module. In fact, one has
\begin{proposition}
If $A$ has finite global dimension and has Hilbert series $(1-t)^{-d}$, then the Hilbert series of a fat point $M$ is 1-periodic, that is, $H_M(t) = m(1-t)^{-1}$ for some $m \in \N$.
\end{proposition}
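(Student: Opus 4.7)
The plan is to combine the finite graded free resolution of $M$, afforded by the finite global dimension hypothesis, with the $e$-periodic multiplicity decomposition from the preceding theorem, in order to pin down the Hilbert series. First I would invoke finite global dimension to produce a minimal graded free resolution
$$0 \to P_d \to \cdots \to P_0 \to M \to 0, \quad P_i = \bigoplus_j A(-j)^{b_{ij}},$$
and combine this with $H_A(t) = (1-t)^{-d}$. Taking an alternating sum of Hilbert series on the resolution then gives $H_M(t) = P(t)/(1-t)^d$ for some Laurent polynomial $P(t) \in \Z[t, t^{-1}]$.

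Next, I would use that $M$, being a simple object of $\wis{Proj}(A)$, has Gelfand--Kirillov dimension one as a graded $A$-module. This forces the pole of $H_M(t)$ at $t = 1$ to have order exactly one, so $(1-t)^{d-1}$ divides $P(t)$ in $\Z[t, t^{-1}]$ (by Gauss's lemma, since $(1-t)^{d-1}$ is primitive). Thus $H_M(t) = Q(t)/(1-t)$ for some $Q(t) \in \Z[t, t^{-1}]$ with $Q(1) > 0$.

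The final step is to show that $Q(t)$ must be a positive integer constant. For this I would invoke the $e$-periodicity of $M$: as an $e$-periodic fat point of multiplicity $(m_0, m_1, \ldots, m_{e-1})$, we have
$$H_M(t) = \frac{m_0 + m_1 t + \cdots + m_{e-1} t^{e-1}}{1 - t^e}.$$
Equating with $Q(t)/(1-t)$ and using $1 - t^e = (1-t)(1 + t + \cdots + t^{e-1})$ yields
$$m_0 + m_1 t + \cdots + m_{e-1} t^{e-1} = Q(t)(1 + t + \cdots + t^{e-1}).$$
Comparing extremal degrees of the Laurent polynomials on both sides forces $Q(t)$ to be a constant $m \in \N$, whence $m_0 = m_1 = \cdots = m_{e-1} = m$ and $H_M(t) = m/(1-t)$.

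The main obstacle I anticipate is the passage from ``$M$ simple in $\wis{Proj}(A)$'' to ``$M$ has GK-dimension one as a graded $A$-module'' in step two. This relies on $A$ being a finite module over its normal central domain $R$, so that fat points correspond scheme-theoretically to closed points of the central projective scheme and hence have GK-dimension one. The rest of the argument is essentially a Hilbert-series bookkeeping computation.
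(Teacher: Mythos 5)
Your proposal is correct and follows the same basic strategy as the paper: finite global dimension plus $H_A(t)=(1-t)^{-d}$ gives $H_M(t)$ as a rational function with denominator a power of $(1-t)$, the $e$-periodicity of $M$ gives $H_M(t)=p(t)/(1-t^e)$ with $\deg p\leq e-1$, and equating the two forces the numerator to be constant. The one place you diverge is the step you yourself flag as the main obstacle: you pin down the order of the pole at $t=1$ by an appeal to $M$ having Gelfand--Kirillov dimension one, which you then feel obliged to justify via the finiteness of $A$ over its center. The paper's proof shows this detour is unnecessary: writing $H_M(t)=f(t)/(1-t)^l$ with $(1-t,f(t))=1$ and equating with $p(t)/(1-t^e)$ already forces $(1-t)^l$ to divide $1-t^e=(1-t)(1+t+\cdots+t^{e-1})$, and since the second factor does not vanish at $t=1$ this gives $l=1$ outright. (Alternatively, the $e$-periodicity you are already using implies $\dim_\C M_j$ is bounded, which gives GK-dimension one for free.) So the divisibility comparison in your final step does double duty in the paper's argument, and your proof becomes fully self-contained once you let it; your concluding degree count, showing $Q(t)$ is a constant $m$ with $m_0=\cdots=m_{e-1}=m$, matches the paper's conclusion that $f(t)(1+t+\cdots+t^{e-1})=p(t)$ forces $f(t)=m$.
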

\begin{proof}
Let $M$ be an $e$-periodic fat point. As $M$ has a finite projective resolution, it follows that
$$
\frac{f(t)}{(1-t)^l}=\frac{p(t)}{1-t^e} \text{ with } (1-t,f(t))=1, \deg(p(t))=e-1.
$$
But then it follows that $(1-t)^l$ divides $1-t^e$, which can only happen if $l=1$. In this case,
$$
f(t)(1+t+\ldots+t^{e-1}) = p(t),
$$
which can only happen if $f(t)=m$ is constant. But then the Hilbert series of $M$ is $m(1-t)^{-1}$.
\end{proof}
The integer $m$ of the proposition is called the multiplicity of $M$.
\begin{example}
Continuing example \ref{example:q=-1}, one sees that the algebra morphism 
$$
\xymatrix{A/\mathfrak{m}^g \ar@{->>}[r]& \wis{M}_2(\C[t,t^{-1}])(0,1)}, (x,y) \mapsto\left(\begin{bmatrix}
0 & t \\ 1 & 0
\end{bmatrix},
\begin{bmatrix}
0 & -t \\ 1 & 0
\end{bmatrix}\right)
$$
determines two point modules, the point module $P$ with associated infinite quiver
\begin{center}
\begin{tikzpicture}[scale = 2.5]
   \node[vertice] (a) at ( 0, 0) {$1$};
   \node[vertice] (b) at ( 1, 0) {$1$};
   \node[vertice] (c) at ( 2, 0) {$1$};
   \node[vertice] (d) at ( 3, 0) {$1$};
   \node (e) at ( 4, 0) {$\ldots$};
   \tikzset{every node/.style={fill=white}} 
   \path[->,font=\scriptsize,>=angle 90]
    (a) edge[out=30,in=150]node{$1$} (b)
    (a) edge[out=-30,in=-150]node{$1$} (b)
    (b) edge[out=30,in=150]node{$1$} (c)
    (b) edge[out=-30,in=-150]node{$-1$} (c)
    (c) edge[out=30,in=150]node{$1$} (d)
    (c) edge[out=-30,in=-150]node{$1$} (d)
    (d) edge[out=30,in=150]node{$1$} (e)
    (d) edge[out=-30,in=-150]node{$-1$} (e);
\end{tikzpicture}
\end{center}
and its shift $P[1]$.
\end{example}
\subsection{Sheaf of algebras $\mathcal{A}$}
In this subsection we follow \citep{lebruyn1995central}.
Let $g$ be a central element of $A$ of degree strictly larger than 0. Localizing $A$ at $g$ one gets the graded ring $\Lambda = A[g^{-1}]$ with center $\Sigma = R[g^{-1}]$ and taking the degree 0 part $\Lambda_0$, one gets an algebra over $\Sigma_0$ which is finitely generated as a $\Sigma_0$-module. Gluing these algebras, one gets a sheaf of orders $\mathcal{A}$, finite over $\mathcal{R}=\wis{Proj}(R)$. We have $\Lambda_0 \subset Q^{gr}(A)_0$, which is a division algebra, say of PI-degree $s$.
However, it is not always so that $Z(\Lambda_0) = \Sigma_0$.
\begin{example}
For any quantum algebra $A_\rho$ from example \ref{ex:quantumunity}, localizing at for example $x$, one finds that $\Lambda_0 = \C[\frac{y}{x}]$ and $\Sigma_0 = \C[\frac{y^n}{x^n}]$.
\end{example}
Consider now a fat point module $M$ which is not annihilated by $g$. Then $g$ acts faithfully on $M$ and $M[g^{-1}]$ is a graded $A[g^{-1}]$-module. The module $M[g^{-1}]$ becomes a gr-simple module (that is, it has no non-trivial graded submodules). By the equivalence of categories of Dade's Theorem \citep[Theorem 3.1.1]{nastasescu2011graded}, $N=M[g^{-1}]_0$ is a simple $\Lambda_0$-module.
\par Let $\mathcal{Z} = \wis{Spec}(Z(\mathcal{A}))$ be the central proj of $\mathcal{A}$. By a result of Artin \citep[Section 12.6]{artin1969azumaya}, $\mathcal{Z}$ parametrizes isomorphism classes of semi-simple trace-preserving $s$-dimensional representations of $\mathcal{A}$. There is a surjective map
$$
\xymatrix{\mathcal{Z} \ar@{->>}[r]^-\Phi & \mathcal{R}}.
$$
This map will be finite and therefore generically $e$-to-one for some $e \in \N$.
\begin{proposition}
The integer $e$ is equal to the least common multiple of the degrees of homogeneous elements of $R$.
\label{prop:e}
\end{proposition}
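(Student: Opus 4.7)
My plan is to recast $e$ as the degree of a field extension and compute it using the skew Laurent polynomial structure of the graded quotient ring $Q^{gr}(A)$. Since $\Phi$ is finite and dominant between integral schemes of the same dimension, $e = [k(\mathcal{Z}) : k(\mathcal{R})]$. On a standard affine open $D(g) \subset \mathcal{R}$, the structure sheaves come from $\Sigma_0 = R[g^{-1}]_0$ and $Z(\Lambda_0)$ with $\Lambda = A[g^{-1}]$, so after passing to fraction fields I would identify $k(\mathcal{R}) = Q(R)_0$ and $k(\mathcal{Z}) = Z(Q^{gr}(A)_0)$, where $Q(R)$ and $Q^{gr}(A)$ denote the graded classical rings of quotients. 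The identification for $\mathcal{R}$ is the standard one for $\mathrm{Proj}$; for $\mathcal{Z}$, one must check that the classical fraction ring of $\Lambda_0$ is $Q^{gr}(A)_0$ and that taking the center commutes with this further localization.

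Next I would pick any $x \in A_1 \setminus \{0\}$; as a nonzero homogeneous element of the graded division ring $Q^{gr}(A)$ it is invertible, and conjugation by $x$ defines a ring automorphism $\sigma$ of $Q^{gr}(A)_0$ with $Q^{gr}(A) \cong Q^{gr}(A)_0[x, x^{-1}; \sigma]$ as a skew Laurent polynomial ring. Directly computing the center of such a ring then gives the characterisation $Q(R)_0 = Z(Q^{gr}(A)_0)^{\langle \sigma \rangle}$, and shows that the nonzero homogeneous components of $Q(R) = Z(Q^{gr}(A))$ occur exactly in degrees that are multiples of $d$, where $d$ is the order of $\sigma|_{Z(Q^{gr}(A)_0)}$.

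From the second statement, $d$ is the positive generator of the subgroup of $\Z$ spanned by the degrees of homogeneous elements of $R$, which is the integer $e$ appearing in the proposition. From the first statement, Galois theory applied to the cyclic action of $\langle \sigma \rangle$ on $Z(Q^{gr}(A)_0)$ with fixed field $Q(R)_0$ gives $[Z(Q^{gr}(A)_0) : Q(R)_0] = d$, and combining with the opening reduction yields $e = d$. The main obstacles are the fraction-field identification for $\mathcal{Z}$ (one must rule out that passing from $Z(\Lambda_0)$ to its classical fraction field introduces central elements not already visible in $Z(Q^{gr}(A)_0)$) and the verification that $\sigma$ has finite order on $Z(Q^{gr}(A)_0)$; the latter holds because $A$ is finite over $R$, so any nonzero degree $k$ with $R_k \neq 0$ forces $\sigma^k = \mathrm{id}$.
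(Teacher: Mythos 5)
Your proof is correct in outline, but it takes a genuinely different route from the paper's. The paper argues representation-theoretically and pointwise: it localizes $A$ at central generators and reads $e$ off the graded Artin--Wedderburn form $\wis{M}_n(\C[t,t^{-1}])(0,\ldots,0,1,\ldots,1,\ldots,e-1,\ldots,e-1)$ with $\deg(t)=e$ at a generic graded maximal ideal $\mathfrak{m}^g$, so that the $e$ distinct shift-equivalent fat points lying over $\mathfrak{m}^g \cap R$ make $\Phi$ generically $e$-to-one; the link between $e$ and the degrees of homogeneous elements of $R$ is left implicit there (it amounts to the observation that $R$ maps into the center $\C[t,t^{-1}]$ and that $t$ is a ratio of homogeneous elements of $R$). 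You instead work birationally: in characteristic zero $e=[k(\mathcal{Z}):k(\mathcal{R})]$, and the skew Laurent presentation $Q^{gr}(A)\cong D[x,x^{-1};\sigma]$ with $D=Q^{gr}(A)_0$ reduces everything to the center of that ring plus Artin's lemma for the cyclic action on $Z(D)$. This buys an actual proof of the degree characterisation, and your reading of the statement --- $e$ is the positive generator of the subgroup of $\Z$ generated by the degrees, i.e.\ their \emph{greatest common divisor} --- is the right one: the proposition's ``least common multiple'' is a slip, as the paper's own application confirms ($R$ generated in degrees $n$ and $3$ with $3\mid n$, where the text says ``greatest common divisor'' and gets $e=3$; note $\mathrm{lcm}$ would also give the wrong answer $e=1\neq\mathrm{lcm}(n,3)$ in the $(n,3)=1$ case). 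Two steps you compress are where the real content sits and should be spelled out: (i) the claim that central homogeneous elements occur in \emph{every} degree divisible by $d=\mathrm{ord}(\sigma|_{Z(D)})$ is not a formal computation --- a central $zx^{k}$ forces $\sigma^{k}(a)=z^{-1}az$ with $\sigma(z)=z$, so to produce one in degree $d$ you need Skolem--Noether (valid because $D$ is finite-dimensional over $Z(D)$, again since $A$ is finite over $R$) to see that $\sigma^{d}$ is inner, say $\sigma^d=\mathrm{Inn}(u)$, and then Hilbert 90 for the cyclic extension $Z(D)/Z(D)^{\sigma}$ to replace $u$ by a $\sigma$-fixed unit (writing $\sigma(u)=cu$ with $c\in Z(D)^*$, the relation $\sigma^d(u)=u$ gives $N(c)=1$, so $c=\lambda/\sigma(\lambda)$ and $\lambda u$ works); (ii) the identification $Z(Q^{gr}(A))=Q^{gr}(R)$ is graded Posner and is what lets you convert between degrees of central homogeneous fractions and degrees of homogeneous elements of $R$, as well as justify $Q(R)_0=Z(D)^{\sigma}$. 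With those made explicit (your other flagged verifications --- that $\mathrm{Frac}(Z(\Lambda_0))=Z(Q^{gr}(A)_0)$, which follows since $Q^{gr}(A)_0$ is an Ore localization of $\Lambda_0$ and Posner applies, and that $R_k\neq 0$ forces $\sigma^{k}|_{Z(D)}=\mathrm{id}$ --- go through as you indicate), your argument is complete and is in fact more self-contained on the degree identification than the paper's.
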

\begin{proof}
Localize $A$ at a finite number of generators of $R$, say $B = A[(x_1\cdots x_t)^{-1}]$ with $R = \C[x_1,\ldots,x_t]/(I)$ for $I$ some set of relations. Then any simple representation of dimension $n$ of $B$ comes from an algebra morphism
$$
\xymatrix{B_{\mathfrak{m}^g} \ar@{->>}[r]&\wis{M}_n(\C[t,t^{-1}])(\underbrace{0,\ldots,0}_m,\underbrace{1,\ldots,1}_m,\ldots,\underbrace{e-1,\ldots,e-1}_m)},
$$
for some maximal graded ideal $\mathfrak{m}^g$ of $B$, which we know from the previous discussions corresponds to a sum of $e$ distinct but shift-equivalent fat point modules of multiplicity $\frac{n}{e}$. As a fat point corresponds to a simple representation of $B_0$, we find that the PI-degree of $\mathcal{A}$ is $s=\frac{n}{e}$. As there are $e$ fat points lying over the graded prime ideal $\mathfrak{m}^g \cap R$, it follows that the map $$
\xymatrix{\mathcal{Z} \ar@{->>}[r]^-\Phi & \mathcal{R}}
$$
is generically $e$-to-one.
\end{proof}
The shift functor on $\wis{Proj}(A)$ induces an automorphism on the fat points of a fixed multiplicity. As we now know that fat points correspond to simple representations of $\mathcal{A}$, this shift functor also induces an automorphism $\alpha$ on $\mathcal{Z}$. As $\mathcal{Z}= \mathcal{A}/\wis{PGL}_n(\C)$ and $\wis{Spec}(R) = A/\wis{PGL}_n(\C)$, it follows that $\Phi$ is the quotient map by the group $G = \langle \alpha \rangle$.
\begin{theorem}
The following sets are in one-to-one correspondence:
\begin{itemize}
\item $e$-periodic multiplicity $m$-fat points of $A$,
\item $e$ $m$-dimensional simple representations of $\mathcal{A}$ such that the corresponding points in $\mathcal{Z}$ form an orbit under $\alpha$,
\item graded maximal ideals $\mathfrak{m}^g$ of $A$ such that
$$
A/\mathfrak{m}^g \cong \wis{M}_{em}(\C[t,t^{-1}])(\underbrace{0,\ldots,0}_m,\underbrace{1,\ldots,1}_m,\ldots,\underbrace{e-1,\ldots,e-1}_m), \deg(t)=e,
$$
\item simple $me$-dimensional representations with $\wis{PGL}_{me}(\C)\times \C^*$-stabilizer generated by a subgroup conjugated to $ \langle(g_\zeta,\zeta)\rangle$ with
$$
g_\zeta = \diag(\underbrace{1,\ldots,1}_{m},\underbrace{\zeta,\ldots,\zeta}_{m},\ldots,\underbrace{\zeta^{e-1},\ldots,\zeta^{e-1}}_{m}).
$$
\end{itemize}
\label{th:simporbits}
\end{theorem}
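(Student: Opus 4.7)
My plan is to set up the four bijections as a cycle $(1) \leftrightarrow (4) \leftrightarrow (3) \leftrightarrow (2) \leftrightarrow (1)$, reusing as much of the preceding machinery as possible rather than building each correspondence from scratch. Indeed, specialising the multiplicity vector $(m_0,\ldots,m_{e-1})$ to the constant vector $(m,m,\ldots,m)$ in the already-stated correspondence of \citep[Proposition 3]{le1998generating} gives $(1) \leftrightarrow (4)$ essentially for free, so my real work is to pin down how (3) and (2) fit into this picture.

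For $(3) \leftrightarrow (4)$, I would invoke the graded Artin--Wedderburn theorem \citep[Theorem I.5.8]{nastasescu2011graded} exactly as in Section 2.2: a simple $me$-dimensional representation $M$ with the prescribed stabilizer has annihilator $\mathfrak{m}=\Ann_A(M)$, and the largest homogeneous ideal $\mathfrak{m}^g$ inside $\mathfrak{m}$ is a graded maximal ideal for which $A/\mathfrak{m}^g$ is forced, by the shape of the stabilizer $\langle (g_\zeta,\zeta)\rangle$, to have $a_1=\ldots=a_m=0, a_{m+1}=\ldots=a_{2m}=1,\ldots$, i.e.\ exactly the matrix ring in (3). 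Conversely, the surjection $\psi:A_{\mathfrak{m}^g} \twoheadrightarrow \wis{M}_{em}(\C[t,t^{-1}])(0^m,1^m,\ldots,(e-1)^m)$ composed with $t\mapsto 1$ produces a simple $me$-dimensional representation, and tracking the block form of $\psi(x)$ derived earlier in Section 2.2 shows its stabilizer is $\langle (g_\zeta,\zeta)\rangle$.

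For $(1) \leftrightarrow (2)$, I would use Dade's theorem as in Section 2.4. Given an $e$-periodic multiplicity-$m$ fat point $M$ of $A$, choose a homogeneous central $g\in R$ with $gM\neq 0$ (such $g$ exists because $M$ is torsion-free in high degree and $R$ is a normal domain, so some homogeneous central element acts non-trivially); then $g$ acts faithfully, $M[g^{-1}]$ is gr-simple over $\Lambda=A[g^{-1}]$, and $N=M[g^{-1}]_0$ is a simple $\Lambda_0$-module of dimension $m$, hence a simple $\mathcal{A}$-module (as this works on the standard affine cover of $\mathcal{R}$). The $e$ distinct shifts $M, M[1], \ldots, M[e-1]$ produce $e$ simple $\mathcal{A}$-modules corresponding to the $e$ points in the fibre of $\Phi:\mathcal{Z}\to\mathcal{R}$ above $\mathfrak{m}^g\cap R$ described in Proposition~\ref{prop:e}; since $\Phi$ is the quotient by $G=\langle\alpha\rangle$ and $\alpha$ is induced by the shift, these $e$ points are precisely an $\alpha$-orbit. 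For the reverse direction, an $\alpha$-orbit of $e$ simple $\mathcal{A}$-modules of dimension $m$ reassembles, via the graded-module-to-degree-zero equivalence of Dade's theorem applied to $\bigoplus_i N_i[i]$, into a graded simple $A[g^{-1}]$-module which extends uniquely back to an $e$-periodic fat point of $A$ of multiplicity $m$.

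The main obstacle I anticipate is the compatibility statement in (2), namely that the $e$ simple $\mathcal{A}$-modules associated to one fat point really do form a single $\alpha$-orbit (and not, say, a multiset of points in $\mathcal{Z}$ with non-trivial stabilizer under $G$). Resolving this requires knowing both that the shifts $M, M[1], \ldots, M[e-1]$ are pairwise non-isomorphic in $\wis{Proj}(A)$, which follows from the very definition of $e$-periodicity, and that the $\mathcal{A}$-modules $M[i][g^{-1}]_0$ they produce are themselves pairwise non-isomorphic, which follows from Dade's equivalence since non-isomorphism is preserved by the equivalence. Once this is in hand, the fact that $\Phi$ is precisely the quotient by $\langle\alpha\rangle$ (established just before the theorem) identifies such an orbit with a unique point in $\mathcal{R}$, closing the loop.
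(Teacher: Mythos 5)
Your proposal is correct and follows essentially the same route as the paper, which in fact states this theorem as a summary of the preceding discussion without a separate proof: the $(1)\leftrightarrow(4)$ leg is \citep[Proposition 3]{le1998generating} specialised to the constant multiplicity vector, $(3)\leftrightarrow(4)$ is the graded Artin--Wedderburn analysis of Section 2.2, and $(1)\leftrightarrow(2)$ is the Dade's-theorem argument together with Proposition~\ref{prop:e} and the identification of $\Phi$ as the quotient by $\langle\alpha\rangle$. The only slight imprecision is in your reverse direction for $(1)\leftrightarrow(2)$: Dade's equivalence should be applied to a single simple $\Lambda_0$-module $N_i$ in the orbit (yielding a gr-simple module, hence a fat point), not to $\bigoplus_i N_i[i]$, which is not gr-simple; the orbit as a whole then matches the shift-equivalence class.
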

\begin{example}
Let
$$
A = \C\langle x,y,z\rangle/(yz-\omega zy,zx-\omega xz, xy-\omega yx)
$$
be an Artin-Schelter regular algebra of global dimension 3. Then the center of $A$ is isomorphic to $R \cong \C[u,v,w,d]/(uvw-d^3)$. However, the central proj of $\mathcal{A}$ is isomorphic to $\PP^2$, and in fact $\wis{Proj}(A) \cong \coh(\PP^2)$. In fact, $\wis{Proj}(R) = \PP^2/\Z_3$, with $\Z_3$ acting on $\PP^2 = \PP(\C \Z_3)$. The last fact can be proved using invariant theory: if $\PP^2 = \wis{Proj}(\C[x,y,z])$ with $\Z_3$-weights $0,1,2$, then
$$
\C[x,y,z]^{\Z_3} = \C[x,y^3,z^3,yz].
$$ 
As $\wis{Proj}(C) = \wis{Proj}(C^{(k)})$ for any affine, commutative ring $C$,
\begin{align*}
\wis{Proj}( \C[x,y^3,z^3,yz]) &\cong \wis{Proj}( \C[x,y^3,z^3,yz]^{(3)}) \\&= \wis{Proj}( \C[x^3,y^3,z^3,xyz]) \\&\cong \wis{Proj}(\C[u,v,w,d]/(uvw-d^3)).
\end{align*}
Consequenty, the map
$$
\xymatrix{\PP^2 \ar@{->>}[r]^-\Phi & \PP^2/\Z_3}
$$
is indeed just the quotient map by the automorphism $\alpha = \diag(1,\omega,\omega^2)$.
\end{example}
Armed with this knowledge, we can now tackle the problem of describing all the simple representations of $3$-dimensional Sklyanin algebras.

\section{3-dimensional Sklyanin algebras}
In this section, set $p=3$. Let $E$ be an elliptic curve embedded in $\PP^2$ and a torsion point $\tau \in E$ of order $n>1$. The associated 3-dimensional Sklyanin algebra $A=A(E,\tau)$ is a finite module over its center by \citep[Theorem 1.2]{tate1996homological}. In this section we will give a complete description of the simple representations of $A$.
\par The first thing we have to notice is: if $g$ is the unique central element of degree 3, then all fat points of $A/(g)$ are point modules, which are parametrized by $E$. This follows from the fact that $ A/(g) = \mathcal{O}_\tau(E)$ is the twisted coordinate of $E$ associated to $\tau$ \citep[Theorem 6.8]{artin2007some}. In addition, the shift functor $[1]$ on $E \subset \wis{Proj}(\mathcal{O}_\tau(E))$ is addition with $\tau$, so that each point module is $n$-periodic. From this we deduce using Theorem \ref{th:simporbits}:
\begin{proposition}
Each non-trivial simple representation of $\mathcal{O}_\tau(E)$ is $n$-dimensional with $\wis{PGL}_{n}(\C)\times \C^*$-stabilizer isomorphic to $\Z_n$.
\end{proposition}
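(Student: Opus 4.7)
The plan is to apply Theorem \ref{th:simporbits} directly to $\mathcal{O}_\tau(E)$, leveraging the three facts recorded in the paragraph preceding the proposition: every fat point of $\mathcal{O}_\tau(E)$ is a point module, the set of point modules is parametrized by $E$, and the shift functor on $\wis{Proj}(\mathcal{O}_\tau(E))$ acts on $E$ as translation by $\tau$. All that remains is to read off the period and multiplicity of an arbitrary fat point and then translate the output of the theorem into the claimed description of the stabilizer.

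First I would compute the period. Let $P_p$ denote the point module attached to $p \in E$; then $P_p[k] \cong P_{p+k\tau}$ in $\wis{Proj}(\mathcal{O}_\tau(E))$, so the period of $P_p$ is the least $k \geq 1$ with $k\tau = 0$ in $E$. Since $\tau$ has order exactly $n$, this period equals $n$ for \emph{every} point module, independently of $p$. Combined with the fact that every fat point of $\mathcal{O}_\tau(E)$ is a point module (hence has multiplicity $m = 1$), we are therefore in the case $(e,m) = (n,1)$ of Theorem \ref{th:simporbits}.

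Plugging $(e,m) = (n,1)$ into the fourth bullet of that theorem produces a simple representation of dimension $em = n$ whose $\wis{PGL}_n(\C) \times \C^*$-stabilizer is, up to conjugation, the cyclic group generated by $(g_\zeta, \zeta)$, with $\zeta$ a primitive $n$th root of unity and $g_\zeta = \diag(1, \zeta, \zeta^2, \ldots, \zeta^{n-1})$. A short verification shows that $g_\zeta^k$ is a scalar matrix (i.e.\ trivial in $\wis{PGL}_n(\C)$) exactly when $\zeta^k = 1$, so $(g_\zeta, \zeta)$ has order $n$ and the stabilizer is isomorphic to $\Z_n$. The bijection asserted in Theorem \ref{th:simporbits} then guarantees that every non-trivial simple representation of $\mathcal{O}_\tau(E)$ arises from some point module, so $n$ is the only dimension that can occur.

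The only soft point — and the closest the argument comes to an obstacle — is checking that fat point modules of the quotient $\mathcal{O}_\tau(E) = A/(g)$ coincide with fat point modules of $A$ annihilated by $g$, and that the two shift functors agree on them, so that the classification for $A$ really does specialize to $\mathcal{O}_\tau(E)$. Both statements are immediate from the intrinsic definition of $\wis{Proj}$ and the shift, but they deserve a one-line mention in the write-up.
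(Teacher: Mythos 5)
Your proposal is correct and follows exactly the paper's route: the paper states that all fat points of $\mathcal{O}_\tau(E)$ are point modules parametrized by $E$, that the shift acts as translation by $\tau$ (hence each point module is $n$-periodic of multiplicity $1$), and then deduces the proposition from Theorem \ref{th:simporbits} with $(e,m)=(n,1)$, which is precisely your argument. Your closing remark about matching fat points of $A/(g)$ with $g$-annihilated fat points of $A$ is a reasonable extra care that the paper leaves implicit.
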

This fact is independent of the conditions $(n,3)=1$ or $(n,3)=3$.
\subsection{$(n,3)=1$}
The first thing one has to determine is the center of $A$.
\begin{theorem}\citep[Theorem 4.8]{smith1994center}
The center of $A$ is isomorphic to
$$
R=\C[u,v,w,g]/(\Psi(u,v,w)-g^n),$$
with $\deg(u)= \deg(v)=\deg(w) = n, \deg(g) = 3$ 
and $\Psi(u,v,w)$ a homogeneous polynomial of degree $3n$ which embeds the elliptic curve $E'=E/\langle \tau \rangle$ in $\PP^2_{[u:v:w]}$.
Consequently, the central proj is $\PP^2 \cong \wis{Proj}(R)$.
\end{theorem}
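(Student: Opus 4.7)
The plan is to exploit the fact that the quotient $A/(g)$ by a central cubic $g$ equals the twisted homogeneous coordinate ring $\mathcal{O}_\tau(E) = B(E, \mathcal{L}, \sigma_\tau)$, with $\sigma_\tau$ translation by $\tau$ on $E$ and $\mathcal{L}$ a degree $3$ line bundle, and then to lift the centre back to $A$. As a preliminary step, recall that a 3-dimensional Sklyanin algebra carries a canonical central element of degree $3$, determined explicitly by the defining relations; call it $g$. When $(n,3)=1$, this element is nonzero in all relevant quotients.

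Next, I would identify the graded centre of $A/(g)$. A homogeneous section of degree $m$ in $B(E, \mathcal{L}, \sigma_\tau)$ is central if and only if it is fixed by $\sigma_\tau^m$, and since $\tau$ has order exactly $n$, this forces $n \mid m$. The smallest central degree is therefore $m = n$, where the space of $\sigma_\tau$-invariants
$$
H^0(E, \mathcal{L}_n)^{\langle \sigma_\tau \rangle} \cong H^0(E', \mathcal{L}')
$$
is three-dimensional by Riemann-Roch, with $\deg \mathcal{L}' = 3$ on the elliptic curve $E' = E/\langle \tau \rangle$. A basis $\bar u, \bar v, \bar w$ of this space then embeds $E'$ in $\PP^2$ as a smooth plane cubic, cut out by a single cubic relation $\Psi(\bar u, \bar v, \bar w) = 0$, homogeneous of degree $3n$ in the ambient grading.

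The third step is to lift $\bar u, \bar v, \bar w$ to genuine central elements $u, v, w \in A$ of degree $n$. Since $g$ is regular central and $A/(g)$ is a noetherian domain with well-behaved $(g)$-adic filtration, one can proceed by successive approximation modulo powers of $g$, correcting at each stage to preserve centrality. Once $u, v, w$ are lifted, the identity $\Psi(\bar u, \bar v, \bar w) = 0$ in $A/(g)$ gives $\Psi(u,v,w) \in gA$, and a homogeneous degree count forces $\Psi(u,v,w) = \lambda g^n$ for some $\lambda \in \C^*$, which can be absorbed into a rescaling of $g$ to obtain $\Psi(u,v,w) = g^n$.

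It then remains to check that $u, v, w, g$ generate the full centre $R$ with this as the sole relation. This reduces to a Hilbert series comparison: the candidate presentation has series
$$
\frac{1-t^{3n}}{(1-t^n)^3 (1-t^3)},
$$
whose leading pole at $t=1$ has coefficient $1/n^2$, consistent with $A$ being a rank $n^2$ module over $R$ and $H_A(t) = (1-t)^{-3}$. The hard part will be precisely this matching together with the lifting step: verifying that $\bar u, \bar v, \bar w$ admit genuine central (not merely normalising) lifts in $A$ is the technical heart of the argument, after which the Hilbert series comparison nails down the presentation.
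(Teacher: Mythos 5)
First, note that the paper offers no proof of this statement: it is imported verbatim from \citep[Theorem 4.8]{smith1994center}, so there is no internal argument to compare yours against. Your outline --- compute the centre of $B=A/(g)=\mathcal{O}_\tau(E)$ via the isogeny $E\to E'=E/\langle\tau\rangle$, lift the degree-$n$ central sections back to $A$, and pin down the presentation --- is in the spirit of the Smith--Tate argument, and the individual computations you do carry out are sound (the invariant space $H^0(E,\mathcal{L}_n)^{\langle\sigma_\tau\rangle}\cong H^0(E',\mathcal{L}')$ is indeed $3$-dimensional and gives the plane cubic $E'$; the minor slip that centrality requires $\sigma_\tau$-invariance, not $\sigma_\tau^m$-invariance which is vacuous once $n\mid m$, is harmless since you use the right space). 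But two steps are asserted rather than proved, and both are exactly where the content lies.

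The first gap is the lifting. You say central elements of $A/(g)$ lift to central elements of $A$ ``by successive approximation modulo powers of $g$,'' but you give no reason the obstruction at each stage vanishes; in general a central element of a quotient by a central regular element does not lift centrally. That this is delicate is visible inside the present paper: for $(n,3)=3$ the same strategy produces a relation $\Psi+3z^2g^{n/3}+3zg^{2n/3}+g^n$ rather than $\Psi-g^n$ (Theorem \ref{th:center}), so the coprimality hypothesis must enter the lifting and relation-finding step, and your sketch never uses it. (Your claim that $\Psi(u,v,w)\in gA\cap Z(A)=gZ(A)$ forces $\Psi=\lambda g^n$ also needs the degree count that the only central monomial of degree $3n-3$ is $g^{n-1}$, which is precisely where $(n,3)=1$ is used.) The second gap is the final identification of $\C[u,v,w,g]/(\Psi-g^n)$ with all of $Z(A)$. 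Matching the leading coefficient of the Hilbert series at $t=1$ is not enough: the inclusion $\C[x^2,x^3]\subset\C[x]$ has identical leading pole behaviour, so ``consistency'' with $A$ having rank $n^2$ over its centre cannot distinguish the candidate subring from a proper graded subring of $Z(A)$ with the same fraction field. One genuinely needs either a full computation of $H_{Z(A)}(t)$ (e.g.\ via the trace map of the Cayley--Hamilton structure) or the normality of the hypersurface $\Psi-g^n$ together with the fact that $Z(A)$ is integral over the candidate with the same function field. As it stands the proposal is a correct plan with the two hardest verifications left open.
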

For the sake of completeness, a review of the discussion in \citep{de2015geometry} will be given. By \citep[Theorem 3.4]{artin1992geometry}, all the fat points that are not annihilated by $g$ are of multiplicity $n$, which also follows from \citep[Theorem 7.3]{artin1991modules} and the above discussion regarding simple representations of the sheaf $\mathcal{A}$ of dimension $n$ and multiplicity $n$ fat point modules.
\begin{theorem}\citep[Lemma 1, Theorem 4]{de2015geometry}
The simple representations of $A$ come in three types, let $S$ be a simple $A$-module with annihilator $\mathfrak{m}=\Ann(S)$.
\begin{itemize}
\item If $g \not\in \mathfrak{m}$ then $S$ is a $n$-dimensional representation of $A$ with trivial $\wis{PGL}_n(\C)\times \C^*$-stabilizer. The $\C^*$-orbit of $S$ corresponds to a unique 1-periodic fat point module of multiplicity $n$, which in turn corresponds to a unique $n$-dimensional representation of $\mathcal{A}$. Consequently, one has
$$
A_{\mathfrak{m}^g}/\mathfrak{m}^g \cong \wis{M}_n(\C[t,t^{-1}]) \text{ with } \deg(t)=1.
$$
\item If $g \in \mathfrak{m}$ but $\dim(S) \neq 1$ then $S$ is $n$-dimensional with $\wis{PGL}_n(\C)\times \C^*$-stabilizer isomorphic to $\Z_n$. The $\C^*$-orbit of $S$ corresponds to $n$ shift-equivalent point modules, who in turn correspond to $n$ 1-dimensional representations of $\mathcal{A}$. Consequently, on has 
$$
A_{\mathfrak{m}^g}/\mathfrak{m}^g \cong \wis{M}_n(\C[t,t^{-1}])(0,1,\ldots,n-1) \text{ with } \deg(t)=n.
$$
\item If $\dim(S)=1$ then $S$ is the trivial representation $A/A^+$.
\end{itemize}
\end{theorem}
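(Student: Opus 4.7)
The proof splits into three cases according to whether $g \in \mathfrak{m}=\Ann_A(S)$ and whether $\dim S = 1$; each case falls out of the machinery of Section~2 combined with the identification $A/(g) \cong \mathcal{O}_\tau(E)$ from the twisted coordinate ring discussion.

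First, if $g \notin \mathfrak{m}$, then $S$ comes from a fat point module of $A$ on which $g$ acts faithfully. By \citep[Theorem 3.4]{artin1992geometry} every such fat point has multiplicity exactly $n$, so the PI-degree of the sheaf $\mathcal{A}$ built by localizing at $g$ equals $n$. Comparing this with the global PI-degree $n$ of $A$ via the relation $s = n/e$ from the proof of Proposition~\ref{prop:e} forces $e = 1$. Feeding $m = n$, $e = 1$ into Theorem~\ref{th:simporbits} immediately produces the first bullet: $S$ is $n$-dimensional with trivial $\wis{PGL}_n(\C)\times \C^*$-stabilizer, its $\C^*$-orbit corresponds to a unique $1$-periodic fat point module of multiplicity $n$, and $A_{\mathfrak{m}^g}/\mathfrak{m}^g \cong \wis{M}_n(\C[t,t^{-1}])$ with $\deg(t) = 1$.

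Next, if $g \in \mathfrak{m}$ and $\dim S > 1$, then $S$ descends to a non-trivial simple $\mathcal{O}_\tau(E)$-module. By the proposition proved just above for $\mathcal{O}_\tau(E)$, such a module is automatically $n$-dimensional with stabilizer $\Z_n$ and corresponds to $n$ shift-equivalent point modules on $E$; Theorem~\ref{th:simporbits} with $m = 1$, $e = n$ then repackages this as the matrix ring description $A_{\mathfrak{m}^g}/\mathfrak{m}^g \cong \wis{M}_n(\C[t,t^{-1}])(0,1,\ldots,n-1)$ with $\deg(t) = n$. Finally, if $\dim S = 1$, then $S$ would correspond to a point of the point scheme $E$ fixed by translation by $\tau$; since $\tau$ has order $n > 1$ no such point exists, leaving only the trivial representation $A/A^+$.

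The only substantive obstacle is the first case: one must know that fat points outside $V(g)$ attain the full PI-degree $n$, which is precisely the input supplied by \citep[Theorem 3.4]{artin1992geometry}. Once this is granted, the rest is formal bookkeeping through Theorem~\ref{th:simporbits} and Proposition~\ref{prop:e}, together with the identification of $A/(g)$ with the twisted coordinate ring of $(E,\tau)$.
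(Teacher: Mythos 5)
Your proposal is correct and follows essentially the same route the paper takes: the paper gives no independent proof, citing \citep[Lemma 1, Theorem 4]{de2015geometry} and only remarking that fat points off $\V(g)$ have multiplicity $n$ by \citep[Theorem 3.4]{artin1992geometry}, while the rest (the case split on $g\in\mathfrak{m}$, the reduction of the second case to the preceding proposition on $\mathcal{O}_\tau(E)$, and the bookkeeping through Theorem \ref{th:simporbits} and Proposition \ref{prop:e}) is exactly the intended expansion of that sketch. Your filled-in details, including the fixed-point-free translation argument ruling out non-trivial one-dimensional representations, are sound.
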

Consequently, in the affine case, we have
\begin{align*}
&\xymatrix{\wis{irrep}_n A \ar@{->>}[r]^-{1:1}& \wis{irrep} R\setminus \V(u,v,w,g)},\\
&\xymatrix{\wis{irrep}_1 A \ar@{->>}[r]^-{1:1}& \wis{irrep} \V(u,v,w,g)}\\
\end{align*}
while in the projective case
\begin{align*}
&\xymatrix{\wis{irrep}_n \mathcal{A} \ar@{->>}[r]^-{1:1}& \wis{Proj}(R)\setminus E' = \PP^2 \setminus E'},\\
&\xymatrix{\wis{irrep}_1 \mathcal{A}=E \ar@{->>}[r]^-{n:1}&  E'}.
\end{align*}

\subsection{$(n,3)=3$}
If $(n,3) = 3$, then there will be a difference between $\mathcal{R}=\wis{Proj}(R)$ and $\mathcal{Z}$. Let $E' = E/\langle 3 \tau \rangle$ and $E'' = E'/\langle \tau \rangle$. We will assume that $n \neq 3$.
\begin{theorem}\citep[Theorem 4.8]{smith1994center}
The center of $A$ is isomorphic to 
$$
R=\C[u,v,w,g]/(\Psi(u,v,w)+3z^2 g^{\frac{n}{3}} + 3z g^{\frac{2n}{3}} + g^n)$$
 with $\deg(u)=\deg(v)=\deg(w)=n, \deg(g)=3$. The polynomial $\Psi(u,v,w)$ of degree $3n$ corresponds to $E''$ embedded in $\PP^2_{[u:v:w]}$ and $z$ is a linear polynomial in $u$,$v$,$w$ that vanishes on three inflection points of $E''$.
\label{th:center}
\end{theorem}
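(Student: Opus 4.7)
The plan is to bootstrap from the structure of $A/(g)$ discussed in the preceding subsection. By \citep[Theorem 6.8]{artin2007some} one has $A/(g) = \mathcal{O}_\tau(E)$, and since $A$ is a domain $g$ is, up to scalar, the unique central element of degree $3$. The image of $R$ in $A/(g)$ is the ring of $\langle\tau\rangle$-invariant sections, which is the homogeneous coordinate ring of the quotient elliptic curve $E'' = E/\langle\tau\rangle$ embedded in $\PP^2_{[u:v:w]}$ by a cubic $\Psi$ of degree $3n$. Lifting $u,v,w$ arbitrarily to elements of $R_n$ and adjoining $g$ produces a generating set for $R$; the remaining task is to pin down the single relation among these generators and verify that no further generators or relations are needed.

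Since $\Psi(u,v,w) \equiv 0 \pmod{g}$, there is a homogeneous identity in $R$ of the shape
\begin{equation*}
\Psi(u,v,w) + \lambda_0 g^{n} + L_1(u,v,w)\, g^{2n/3} + Q_2(u,v,w)\, g^{n/3} = 0,
\end{equation*}
where $L_1$ is linear, $Q_2$ is quadratic, and $\lambda_0 \in \C$; the fact that all three exponents are integers is precisely the hypothesis $3 \mid n$, which separates this case from $(n,3)=1$. To identify $L_1$ and $Q_2$, I would use the geometry of the intermediate cover $E \to E''$, which factors through $E' = E/\langle 3\tau\rangle$ and realises $E \to E''$ as a $\Z_3$-Galois cover. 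The branch locus of this cover consists of three inflection points of $E''$ lying on a common line $\{z = 0\}$; matching the polar divisor on each side of the identity, or equivalently recognising the correction term as a completion of a cube
\begin{equation*}
\lambda_0 g^{n} + L_1 g^{2n/3} + Q_2 g^{n/3} = (z + g^{n/3})^3 - z^3
\end{equation*}
after rescaling $g$, forces $L_1 = 3z$, $Q_2 = 3z^2$, and $\lambda_0 = 1$.

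The final step is a Hilbert series comparison. The candidate ring $R' = \C[u,v,w,g]/(\Psi + 3z^2 g^{n/3} + 3z g^{2n/3} + g^{n})$ is a complete intersection with
\begin{equation*}
H_{R'}(t) = \frac{1-t^{3n}}{(1-t^n)^3 (1-t^3)},
\end{equation*}
and from $H_A(t) = (1-t)^{-3}$ together with the generic rank $n^2$ of $A$ over its center one verifies $H_A(t)/H_{R'}(t) \to n^2$ as $t \to 1$. Thus the natural surjection $R' \twoheadrightarrow R$ is forced to be an isomorphism, and the central proj equals $\PP^2$ because the defining relation is monic in $g^{n}$, so inverting any one of $u,v,w$ eliminates the relation.

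The hard part is the second step: identifying the precise form of the correction to $\Psi$, and in particular recognising $z$ as the linear form cutting out three inflection points of $E''$. Conceptually this is an incarnation of the $\Z_3$-Galois symmetry of the cover $E \to E''$ through $E'$, but turning that symmetry into the explicit equation requires either a theta-function computation or, as in Smith's original argument \citep{smith1994center}, a direct analysis of the multiplication in $A$ via its presentation as an iterated Ore extension.
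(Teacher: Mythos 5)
The paper does not prove this statement: it is quoted verbatim as \citep[Theorem 4.8]{smith1994center}, so there is no in-paper argument to compare yours against. Judged on its own, your outline has the right shape and correctly identifies the key structural feature --- the degree count forcing the relation to have the form $\Psi+\lambda_0 g^n+L_1g^{2n/3}+Q_2g^{n/3}$, and the completion of the cube $(g^{n/3}+z)^3=z^3-\Psi$ reflecting the $\Z_3$-cover $E\to E''$ --- but, as you concede, the entire substance of the theorem (producing the degree-$n$ central elements, showing they together with $g$ generate $R$, and deriving the relation) is deferred to Smith's analysis. In particular, ``lifting $u,v,w$ arbitrarily to $R_n$'' presupposes that $Z(A)\to Z(A/gA)$ hits the degree-$n$ part of $\mathcal{O}(E'')$, which is not automatic and is where the real work lies; the paper's own later proposition constructs such elements as cubes $u_0^3,v_0^3,w_0^3$ of degree-$s$ normal elements, again leaning on \citep{smith1994center}.

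Two steps as written do not close. First, the Hilbert-series finish: the limit $H_A(t)/H_{R'}(t)\to n^2$ as $t\to 1$ only matches leading asymptotics, and a proper graded subring or a finite birational quotient can have the same leading term, so this does not force $R'\twoheadrightarrow R$ to be an isomorphism (nor does it address generation, which the surjectivity of $R'\to R$ already assumes). A cleaner way to finish, once generation and the relation are known, is that $R'$ is a $3$-dimensional graded domain surjecting onto the $3$-dimensional domain $R$, so the kernel is a height-zero prime, hence zero --- but this requires checking $R'$ is a domain. Second, your closing claim that $\wis{Proj}(R)\cong\PP^2$ is not part of the statement in this case and contradicts the paper's subsequent analysis: here $\mathcal{Z}\cong\PP^2$ maps $3{:}1$ onto $\mathcal{R}=\wis{Proj}(R)\cong\PP^2/\Z_3$, which is singular at the three fixed points of $\diag(1,\omega,\omega^2)$ and hence is not $\PP^2$; the ``central proj is $\PP^2$'' clause belongs only to the $(n,3)=1$ version of the theorem.
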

By Proposition \ref{prop:e}, the map
$$
\xymatrix{\mathcal{Z} \ar@{->>}[r]^-\Phi & \mathcal{R}}
$$
will be a quotient map by an automorphism $\alpha$ of order 3, which is the greatest common divisor of the degrees of the homogeneous elements of $R$. By \citep[Theorem 4.7]{smith1994center}, $\mathcal{Z} \cong \PP^2$ and $E' \subset \mathcal{Z}$.  Let $s =\frac{n}{3}$, then again by \citep[Theorem 3.4]{artin1992geometry}, all the fat points not annihilated by $g$ are of multiplicity $s$.
\par As $E'$ is the ramification locus of $\mathcal{A}$ (that is, the closed subset of $\mathcal{Z}$ where $\mathcal{A}$ is not Azumaya), it follows that $\alpha$ has to induce an automorphism of order three on $E'$ without fix points. This can only be if $\alpha$ is conjugated to $\langle\diag(1,\omega,\omega^2)\rangle \subset \wis{PGL}_3(\C)$. But then, $\alpha$ has three fixed points, which correspond to three fat points of multiplicity $s$ which are 1-periodic. They in turn correspond to three $\C^*$-families of simple representations of dimension $s$, with trivial $\wis{PGL}_s(\C) \times \C^*$-stabilizer.
\begin{theorem}
The simple representations of $A$ come in four types. Let $S$ be a simple $A$-module with annihilator $\mathfrak{m}=\Ann(S)$.
\begin{itemize}
\item If $g \not\in \mathfrak{m}$ and $S$ is a quotient of a fat point $F$ that is not fixed by the shift functor, then $S$ is $n$-dimensional with $\wis{PGL}_n(\C) \times \C^*$-stabilizer isomorphic to $\Z_3$. The fat point $F$ is 3-periodic and has multiplicity $s$. $F$ and its shifts $F[1]$ and $F[2]$ correspond to three different $s$-dimensional representations of $\mathcal{A}$. Consequently, one has
$$
A_{\mathfrak{m}^g}/\mathfrak{m}^g \cong \wis{M}_n(\C[t,t^{-1}])(\underbrace{0,\ldots,0}_{s},\underbrace{1,\ldots,1}_{s},\underbrace{2,\ldots,2}_{s}) \text{ with } \deg(t)=3.
$$
\item If $g \not\in \mathfrak{m}$ and $S$ is a quotient of a fat point $F$ that is fixed by the shift functor, then $S$ is $s$-dimensional with trivial $\wis{PGL}_s(\C) \times \C^*$-stabilizer. The multiplicity of $F$ is $s$ and $F$ corresponds to a unique $s$-dimensional representation of $\mathcal{A}$. Consequently, one has
$$
A_{\mathfrak{m}^g}/\mathfrak{m}^g \cong \wis{M}_s(\C[t,t^{-1}])\text{ with } \deg(t)=1.
$$
\item If $g \in \mathfrak{m}$ and $S$ is not one-dimensional, then $S$ is $n$-dimensional with $\wis{PGL}_n(\C)\times \C^*$-stabilizer isomorphic to $\Z_n$. The $\C^*$-orbit of $S$ corresponds to $n$ shift-equivalent point modules, who in turn correspond to $n$ 1-dimensional representations of $\mathcal{A}$. Consequently, on has 
$$
A_{\mathfrak{m}^g}/\mathfrak{m}^g \cong \wis{M}_n(\C[t,t^{-1}])(0,1,\ldots,n-1) \text{ with } \deg(t)=n.
$$
\item If $\dim(S)=1$ then $S$ is the trivial representation $A/A^+$.
\end{itemize}
\end{theorem}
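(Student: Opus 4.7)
My plan is to treat the four cases separately by reducing each to results already available in the excerpt, and to concentrate the geometric work on isolating the shift-fixed fat points on $\mathcal{Z}$.

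First I would dispose of the two cases involving $g$. The $1$-dimensional case is immediate: a simple $A$-module of dimension $1$ must factor through $A/A^+ = \C$, which gives the trivial representation. For a simple $S$ with $g \in \mathfrak{m} = \Ann(S)$ and $\dim(S) \neq 1$, the module $S$ becomes a simple representation of the twisted homogeneous coordinate ring $A/(g) = \mathcal{O}_\tau(E)$. The proposition stated just before Theorem \ref{th:center}, combined with the remark that the shift on $E \subset \wis{Proj}(\mathcal{O}_\tau(E))$ is translation by $\tau$ (which has order $n$), then forces $S$ to be $n$-dimensional with $\wis{PGL}_n(\C)\times \C^*$-stabilizer $\Z_n$. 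The shape of $A_{\mathfrak{m}^g}/\mathfrak{m}^g$ is then read off directly from Theorem \ref{th:simporbits} applied with $e=n$ and $m_i=1$.

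The substantive part is the case $g\notin \mathfrak{m}$. I would localize at $g$ to pass to the sheaf $\mathcal{A}$ over $\mathcal{Z}$, and invoke Theorem \ref{th:simporbits} in the form: simple representations of $A$ in this range correspond bijectively to $\alpha$-orbits of simple representations of $\mathcal{A}$, where $\alpha$ is the automorphism of $\mathcal{Z}$ induced by the shift. Proposition \ref{prop:e} together with the description of $R$ in Theorem \ref{th:center} gives that $\alpha$ has order $\gcd(n,3)=3$. Since $\mathcal{Z}\cong \PP^2$ and $E'\subset \mathcal{Z}$ is the ramification locus, $\alpha$ restricts to an order-$3$ automorphism of $E'$, and because $E'$ is a $3$-isogenous quotient of the elliptic curve $E$ this restricted action must be a translation, hence fixed-point free on $E'$. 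Inside $\wis{PGL}_3(\C)$, the only conjugacy class of order-$3$ elements acting without fixed points on a smooth plane cubic is that of $\diag(1,\omega,\omega^2)$, so $\alpha$ must be of this form.

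From this normal form I extract the two subcases. The generic orbit of $\alpha$ on $\mathcal{Z}$ has length three, and by Theorem \ref{th:simporbits} such an orbit produces a single $n$-dimensional simple representation of $A$ with $\wis{PGL}_n(\C)\times \C^*$-stabilizer $\Z_3$ and the asserted presentation of $A_{\mathfrak{m}^g}/\mathfrak{m}^g$ with $e=3$ and each $m_i=s$. The three fixed points of $\alpha$ in $\mathcal{Z}\setminus E'$ are fat points of multiplicity $s$ that satisfy $F[1]\cong F$, hence $1$-periodic; by Theorem \ref{th:simporbits} with $e=1$ they yield $s$-dimensional simple representations with trivial $\wis{PGL}_s(\C)\times \C^*$-stabilizer and $A_{\mathfrak{m}^g}/\mathfrak{m}^g \cong \wis{M}_s(\C[t,t^{-1}])$.

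The only point I expect to require extra care is verifying that the induced $\alpha$ on $E'$ is genuinely fixed-point free, since conjugacy to $\diag(1,\omega,\omega^2)$ rests on this. I would deduce it by tracing the shift through the identification of fat points of $\mathcal{O}_\tau(E)$ with points of $E$: on $E$ the shift is translation by $\tau$, on $E'=E/\langle 3\tau\rangle$ it descends to translation by the image of $\tau$, which has order exactly $3$ and hence is free of fixed points. Once this is in place, the remaining assertions follow mechanically from Theorem \ref{th:simporbits}.
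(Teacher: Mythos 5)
Your proposal is correct and follows essentially the same route as the paper: the paper likewise handles the $g\in\mathfrak{m}$ cases via $A/(g)=\mathcal{O}_\tau(E)$, and for $g\notin\mathfrak{m}$ uses Proposition \ref{prop:e} and Theorem \ref{th:center} to get an order-$3$ automorphism $\alpha$ of $\mathcal{Z}\cong\PP^2$ acting freely on the ramification locus $E'$, hence conjugate to $\diag(1,\omega,\omega^2)$ with three fixed points, and then reads off both subcases from Theorem \ref{th:simporbits}. One cosmetic slip: $E\to E'=E/\langle 3\tau\rangle$ is an $s$-isogeny rather than a $3$-isogeny, but your actual argument (the image of $\tau$ has order exactly $3$ in $E'$, so the induced translation is fixed-point free) is the right one and is what the paper implicitly uses.
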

\par Regarding the center $R$ of $A$, we can now give a different description.
\begin{proposition}
The center $R$ is isomorphic to $\C[u,v,w,d,g]/(uvw-d^3, g^s - l)$ with $l$ a linear form in $u$, $v$, $w$ and $d$ such that $R/(g)$ is the homogeneous coordinate ring of $E''$, generated in degree $n$.
\end{proposition}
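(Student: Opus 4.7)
The plan is to unearth a cube completion hidden in the defining relation from Theorem~\ref{th:center}. Writing $s=n/3$, that relation expands as
\[
\Psi(u,v,w) + 3z^2 g^s + 3z g^{2s} + g^n \;=\; \Psi(u,v,w) - z^3 + (g^s+z)^3,
\]
which rearranges to $(g^s+z)^3 = z^3 - \Psi(u,v,w)$. This immediately suggests introducing a new central generator $d$ of degree $n$ defined by $d := g^s + z$. Then $d^3 = z^3 - \Psi(u,v,w)$ automatically, and the relation $g^s = d - z$ is already of the desired form $g^s = l(u,v,w,d)$ with $l := d - z$ linear in $u,v,w,d$.

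It remains to identify $z^3 - \Psi(u,v,w)$ with $uvw$ after a suitable choice of the degree-$n$ generators $u,v,w$. This is a classical fact about elliptic cubics with a triple of collinear flexes. Since the three inflection points $P_1,P_2,P_3$ on which $z$ vanishes are flexes, the inflectional tangent line $T_i$ at $P_i$ meets $E''$ only at $P_i$, with multiplicity three. Hence, letting $u_i$ be a defining linear form for $T_i$, the restriction $\Psi|_{u_i=0}$ vanishes to order three at $P_i = \{u_i=0\}\cap\{z=0\}$, so $\Psi|_{u_i=0} = c_i\,z^3|_{u_i=0}$ for some $c_i \in \C^*$. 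The collinearity of the three $P_i$, together with the classical Hesse-pencil analysis, forces $c_1=c_2=c_3=c$; then $\Psi - cz^3$ vanishes on each $u_i=0$, so $u_1 u_2 u_3$ divides $\Psi - cz^3$, and comparing degrees yields $\Psi = cz^3 + e\cdot u_1 u_2 u_3$. Absorbing constants into a final rescaling of the $u_i$ and of $z$ and relabeling $u_1,u_2,u_3$ as $u,v,w$, we normalize to $\Psi(u,v,w) = z^3 - uvw$.

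Granting this, the isomorphism is straightforward. Set $\tilde R := \C[u,v,w,d,g]/(uvw - d^3,\, g^s - l)$. The map $\tilde R \to R$ sending $u,v,w,g$ to themselves and $d \mapsto g^s + z$ is well-defined, as both defining relations vanish by the cube-completion identity. Conversely, the map $R \to \tilde R$ sending $u,v,w,g$ to themselves is well-defined, since in $\tilde R$ one has $(g^s+z)^3 = d^3 = uvw = z^3 - \Psi$, which reverses to the defining relation of $R$; these maps are mutually inverse. Finally, setting $g=0$ gives
\[
R/(g) \;\cong\; \C[u,v,w,d]/(uvw-d^3,\,d-z) \;\cong\; \C[u,v,w]/(uvw - z^3) \;=\; \C[u,v,w]/(\Psi),
\]
the homogeneous coordinate ring of $E''$ by Theorem~\ref{th:center}, generated in degree $n$. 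The main obstacle is the geometric step $z^3 - \Psi = uvw$; once granted, everything else is bookkeeping via the cube-completion identity.
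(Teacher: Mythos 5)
Your reduction of the statement to the single identity $z^3-\Psi = uvw$ (after a linear change of the degree-$n$ generators) is a clean piece of bookkeeping, and the cube-completion $d:=g^s+z$ is exactly the right substitution. But the geometric step contains a genuine gap: the classical flex argument only yields $\Psi = c\,z^3 + e\,u_1u_2u_3$ for \emph{some} nonzero constants $c,e$, whereas your conclusion requires $c=1$, and $c$ cannot be ``absorbed by rescaling.'' The form $z$ is not a free parameter: it is pinned to $g$ by the relation $\Psi+3z^2g^s+3zg^{2s}+g^n=0$ (equivalently $(g^s+z)^3=z^3-\Psi$). Rescaling $g\mapsto \nu g$ forces $z\mapsto \nu^{-s}z$ and $\Psi\mapsto\nu^{-n}\Psi$, which leaves the ratio $c=\Psi|_{T_i=0}/(z|_{T_i=0})^3$ unchanged; simultaneous rescaling of $u,v,w$ likewise fixes $c$. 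If $c\neq 1$, then
\[
z^3-\Psi \;=\; (1-c)\,z^3 - e\,u_1u_2u_3
\]
is a nontrivial member of the pencil spanned by a triple line and a triangle, and such a member is generically a \emph{smooth} cubic (e.g.\ $Z^3 - 2(X^3+Y^3+Z^3) = -Z^3-2(X^3+Y^3)$ is smooth even though $Z=0$ passes through three flexes of the Fermat cubic), so $d^3$ would not be a product of three linear forms and your presentation would fail. Nothing in the statement of Theorem~\ref{th:center} as quoted normalizes $\Psi$ against $z$ so as to force $c=1$; that equality is true for the actual Sklyanin center, but it is an extra input equivalent to what you are trying to prove. (Your assertion that collinearity forces $c_1=c_2=c_3$ is correct but also unargued; it follows because $T_i\cap T_j$ cannot lie on $V(z)$.)

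The paper closes exactly this gap by a different route: it invokes the three \emph{normal elements} $u_0,v_0,w_0$ of degree $s$ from Smith--Tate, writes down the explicit block matrices through which a generic $n$-dimensional simple representation factors, and reads off that $u_0^3, v_0^3, w_0^3$ and $d=u_0v_0w_0$ act by scalars, hence are central, span $R_n$, and satisfy $uvw=d^3$ by construction; $g^s$ is then a linear form in them for dimension reasons. That argument supplies precisely the product-of-three-lines structure (hence $c=1$) that your purely plane-geometric argument cannot extract from the quoted hypotheses. To repair your proof you would need either to import the sharper normalization from \citep[Theorem 4.8]{smith1994center} (where the relation is effectively written with $z^3-\Psi$ already factored) or to reproduce the paper's argument with the degree-$s$ normal elements.
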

\begin{proof}
By \citep[Theorem 3.7, Proposition 4.2]{smith1994center}, there are three normal elements $u_0,v_0,w_0$ of degree $s$ such that $\wis{Proj}(\C[u_0,v_0,w_0]) = \PP^2 = \mathcal{Z}$. By the fact that the shift functor corresponds to $\diag(1,\omega,\omega^2)$ for an open subset of fat points, it follows that each irreducible representation of dimension $n$ restricted to $\C[u_0,v_0,w_0]$ is determined by the matrices for $(\lambda,\mu,\eta) \in \A^3$.
$$
\left(\begin{bmatrix}
0_s & 0_s & \lambda 1_s\\
\omega \lambda 1_s & 0_s & 0_s\\
0_s & \omega^2 \lambda 1_s & 0_s
\end{bmatrix},
\begin{bmatrix}
0_s & 0_s & \mu 1_s\\
\omega \mu 1_s & 0_s & 0_s\\
0_s & \omega^2 \mu 1_s & 0_s
\end{bmatrix},
\begin{bmatrix}
0_s & 0_s & \eta 1_s\\
\omega \eta 1_s & 0_s & 0_s\\
0_s & \omega^2 \eta 1_s & 0_s
\end{bmatrix}\right)
$$
for an open subset of $\A^3$. But then, $u=u_0^3$, $v=v_0^3$, $w=w_0^3$ and $d=u_0v_0w_0$ are sent to scalar matrices for an open subset of $\wis{Spec}(R)$, so they are central in $A$. In addition, they are linearly independent, so they generate $R^{(n)}$ by Theorem \ref{th:center}. But then $g^s = l$ for some linear form in $u$, $v$, $w$ and $d$. The fact that
$$
\V(uvw-d^3,g^s-l)\cap \V(g) = E''
$$
follows from the fact that the center of $A/(g)$ is $\mathcal{O}(E/\langle \tau \rangle) = \mathcal{O}(E'')$. 
\end{proof}
In the affine case, we now find
\begin{align*}
&\xymatrix{\wis{irrep}_n A \ar@{->>}[r]^-{1:1}& \wis{irrep} R \setminus \V(uv,vw,uw)},\\
&\xymatrix{\wis{irrep}_s A \ar@{->>}[r]^-{3:1}& \V(uv,uw,uw)\setminus \V(u,v,w,g)},\\
&\xymatrix{\wis{irrep}_1 A \ar@{->>}[r]^-{1:1}& \V(u,v,w,g)},
\end{align*}
which verifies \citep[Conjecture 1.5]{walton2017poisson}, while in the projective case we find
\begin{align*}
&\xymatrix{\wis{irrep}_s \mathcal{A} \ar@{->>}[r]^-{1:1}& \wis{irrep} \mathcal{Z} \setminus E'},\\
&\xymatrix{\wis{irrep}_1 \mathcal{A}=E \ar@{->>}[r]^-{s:1}&  E'},\\
&\xymatrix{(\wis{irrep} \mathcal{Z}=\PP^2) \setminus \V(u_0v_0,v_0w_0,u_0w_0) \ar@{->>}[r]^-{3:1}& \mathcal{R}\setminus \V(uv,vw,wu,d)},\\
&\xymatrix{\V(u_0v_0,v_0w_0,u_0w_0) \ar@{->>}[r]^-{1:1}& \V(uv,vw,wu,d)}.
\end{align*}
\bibliographystyle{abbrv}
\bibliography{representationssklyaninalgebras}
\nocite{*}

\end{document}